\newcommand{\Av}{\operatorname{Av}}
\newtheorem*{rep@theorem}{\rep@title}
\newcommand{\newreptheorem}[2]{%
\newenvironment{rep#1}[1]{%
 \def\rep@title{#2 \ref{##1}}%
 \begin{rep@theorem}}%
 {\end{rep@theorem}}}
\newtheorem{theorem}{Theorem}[section]
\newtheorem{lemma}{Lemma}[section]
\newtheorem{proposition}{Proposition}[section]
\newtheorem{corollary}{Corollary}[section]
\theoremstyle{definition}
\newtheorem{definition}{Definition}[section]
\newtheorem{remark}{Remark}[section]
\DeclareMathOperator{\de}{def}
\DeclareMathOperator{\fer}{fer}
\DeclareMathOperator{\swap}{swap}
\DeclareMathOperator{\slmax}{slmax}
\DeclareMathOperator{\indmax}{indmax}
\DeclareMathOperator{\lmax}{lmax}
\DeclareMathOperator{\LenDes}{LenDes}
\DeclareMathOperator{\swd}{swd}
\DeclareMathOperator{\swu}{swu}
\DeclareMathOperator{\rev}{rev}
\DeclareMathOperator{\Des}{Des}
\DeclareMathOperator{\des}{des}
\DeclareMathOperator{\peak}{peak}
\DeclareMathOperator{\tl}{tl}
\DeclareMathOperator{\zeil}{zeil}
\DeclareMathOperator{\rmax}{rmax}
\DeclareMathOperator{\DPT}{\mathsf{DPT}}
\begin{document}
\title{Polyurethane Toggles}
\author{Colin Defant}
\address{Princeton University \\ Fine Hall, 304 Washington Rd. \\ Princeton, NJ 08544}
\email{cdefant@princeton.edu}

\begin{abstract}
We consider the involutions known as \emph{toggles}, which have been used to give simplified proofs of the fundamental properties of the promotion and evacuation maps. We transfer these involutions so that they generate a group $\mathscr P_n$ that acts on the set $S_n$ of permutations of $\{1,\ldots,n\}$. After characterizing its orbits in terms of permutation skeletons, we apply the action in order to understand West's stack-sorting map. We obtain a very simple proof of a result that clarifies and extensively generalizes a theorem of Bouvel and Guibert and also generalizes a theorem of Bousquet-M\'elou. We also settle a conjecture of Bouvel and Guibert. We prove a result related to the recently-introduced notion of postorder Wilf equivalence. Finally, we investigate an interesting connection among the action of $\mathscr P_n$ on $S_n$, the group structure of $S_n$, and the stack-sorting map. 
\end{abstract}

\maketitle

\bigskip

\section{Introduction}\label{Sec:Intro}

\subsection{Toggles, Trees, and Permutations}

A \emph{linear extension} of an $n$-element poset $P$ is a bijection $L:P\to[n]$ such that $L(x)\leq L(y)$ whenever $x\leq_{P}y$. We often view $L$ as a labeling of the elements of $P$, where $L(x)$ is the label of $x$. Let $\mathcal L(P)$ denote the set of linear extensions of $P$. In \cite{Bender}, Bender and Knuth made use of special involutions on semistandard Young tableaux, which, when restricted to standard Young tableaux, can be seen as involutions on the set of linear extensions of a poset. Subsequently, these have been called \emph{Bender-Knuth involutions}. Promotion and evacuation are bijections defined on $\mathcal L(P)$ that were first studied extensively by Sch\"utzenberger \cite{Schutzenberger1,Schutzenberger2,
Schutzenberger3}. We refer the reader to Stanley's beautiful survey article \cite{StanleyPromotion}, which gives much more information about these important maps. Haiman \cite{Haiman} and Malvenuto--Reutenauer \cite{Malvenuto} simplified Sch\"utzenberger's approach by showing that promotion and evacuation can be defined in terms of generalizations of the Bender-Knuth involutions. Following the work of Striker and Williams in \cite{Striker}, the term \emph{toggle} has been used to refer to Bender-Knuth involutions and other related types of involutions. Roughly speaking, a toggle is an involution on a set of combinatorial objects that only makes a small local change. See \cite{Roby} for more information about toggles. 

The toggles that Haiman and Malvenuto--Reutenauer used are defined as follows. If $i\in[n-1]$ and $L\in\mathcal L(P)$, then we obtain a new labeling $\swap_i(L)$ of $P$ by swapping the labels $i$ and $i+1$. Note that $\swap_i(L)$ is a linear extension of $P$ if and only if the elements $L^{-1}(i)$ and $L^{-1}(i+1)$ are incomparable in $P$. The toggle $p_i:\mathcal L(P)\to \mathcal L(P)$ is defined by \[p_i(L)=\begin{cases} \swap_i(L), & \mbox{if } \swap_i(L)\in\mathcal L( P); \\ L, & \mbox{otherwise.}  \end{cases}\] Note that each map $p_i$ is an involution and that $p_i\circ p_j=p_j\circ p_i$ whenever $i$ and $j$ are not consecutive integers. We let $\mathfrak S_Z$ denote the symmetric group on a set $Z$, which is the group of bijections from $Z$ to itself. Thus, $p_1,\ldots,p_{n-1}\in\mathfrak S_{\mathcal L( P)}$. The \emph{toggle group of} $p_1,\ldots,p_{n-1}$ is the subgroup of $\mathfrak S_{\mathcal L( P)}$ generated by $p_1,\ldots,p_{n-1}$. 

A \emph{rooted plane tree} is a rooted tree with finitely many vertices in which the (possibly empty) subtrees of each vertex are linearly ordered (from left to right). Binary plane trees, ternary plane trees, Motzkin trees, and many other natural trees are all examples of rooted plane trees. Given a set $X$ of positive integers, a \emph{decreasing plane tree on $X$} is a rooted plane tree whose vertices are bijectively labeled with the elements of $X$ so that every nonroot vertex has a label that is smaller than the label of its parent. If $X=[n]$, then this is the same as a linear extension of the poset whose Hasse diagram is the rooted plane tree (where the root is the maximum element and the leaves are the minimal elements). Let $\DPT$ be the set of decreasing plane trees. The \emph{skeleton} of a decreasing plane tree $T$ is the rooted plane tree obtained by removing the labels from $T$. A \emph{binary plane tree} is a rooted plane tree in which each vertex has exactly $2$ (possibly empty) subtrees (the left and right subtrees). Let $\mathsf{DBPT}\subseteq\DPT$ be the set of decreasing binary plane trees.  

Throughout this article, unless otherwise specified, we use the word \emph{permutation} to refer to an ordering of a finite set of positive integers, written as a word. Let $S_n$ denote the set of permutations of $[n]$. We can obtain a permutation from a labeled tree using a \emph{tree traversal}. One useful tree traversal that is defined on decreasing binary plane trees is the \emph{in-order} traversal (sometimes called the \emph{symmetric order} traversal). In order to read a decreasing binary plane tree in in-order, we read the left subtree of the root in in-order, then read the label of the root, and finally read the right subtree of the root in in-order. For example, \[\begin{array}{l}\includegraphics[height=1.3cm]{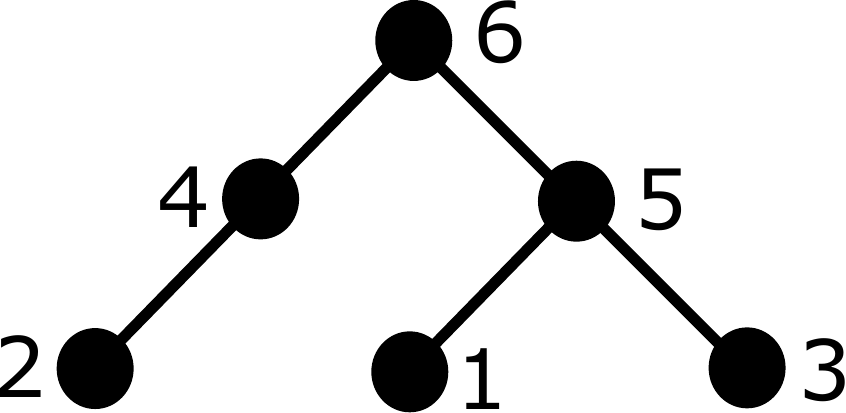}\end{array}\xrightarrow{\,\,\,\mathcal I\,\,\,}246153.\]
The in-order reading $\mathcal I(T)$ of a decreasing binary plane tree $T$ is a permutation of the set of labels of $T$. It is well known \cite{Bona, Stanley} that the in-order reading $\mathcal I$ is a bijection from $\mathsf{DBPT}$ to the set of all permutations. Since we have already defined the skeleton of a decreasing binary plane tree (this is the tree obtained by removing the labels), it makes sense to define skeletons of permutations. Namely, the skeleton of a permutation $\pi$ is the skeleton of $\mathcal I^{-1}(\pi)$.

Because each permutation $\pi\in S_n$ has an associated decreasing binary plane tree $\mathcal I^{-1}(\pi)$, which can be seen as a linear extension of a poset, we can transfer the toggles $p_1,\ldots,p_{n-1}$ above to obtain toggles $p_1,\ldots,p_{n-1}\in\mathfrak S_{S_n}$.\footnote{We use the same symbols $p_1,\ldots,p_{n-1}$ by an abuse of notation, but this should not lead to any confusion since we usually only consider the toggles defined on $S_n$.} Thus, we obtain a group $\mathscr P_n=\langle p_1,\ldots,p_{n-1}\rangle\leq\mathfrak S_{S_n}$. Note that $\mathscr P_n$ is not a toggle group as we defined it above because different permutations could have different skeletons that then give rise to different posets. The permutations $\pi$ and $p_i(\pi)$ always have the same skeleton, so all of the elements of $\mathscr P_n$ preserve skeletons. We call the maps $p_1,\ldots,p_{n-1}$ \emph{polyurethane toggles} and call $\mathscr P_n$ the $n^\text{th}$ \emph{polyurethane group}.\footnote{Polyurethane is a polymer used to manufacture surface coatings that preserve skeletons.} 

Just as Haiman and Malvenuto--Reutenauer used toggles to simplify Sch\"utzenberger's proofs concerning promotion and evacuation, we will use the above toggles on $S_n$ to generalize and simplify the proofs of several results concerning West's stack-sorting map. This is a function $s$ that sends permutations to permutations; we define it in Section \ref{Sec:Preliminaries}. 

Although we will not need this fact, we wish to remark that the polyurethane toggles fit into a more general context explored by Bj\"orner and Wachs \cite{Bjorner}. Given $\pi=\pi_1\cdots\pi_n\in S_n$, let $\pi^{-1}$ be the permutation whose $\pi_i^\text{th}$ entry is $i$ for all $i\in[n]$. Fix a binary plane tree $T$, and let $A$ be the set of permutations in $S_n$ with skeleton $T$. A special consequence of one of the main results in \cite{Bjorner} (written using different language) is that the set $s(A)^{-1}=\{s(\pi)^{-1}:\pi\in A\}$ is an interval in the weak Bruhat order and that the inversion statistic and the major index are equidistributed on $s(A)^{-1}$.

\subsection{Summary of Main Results}

Section \ref{Sec:Preliminaries} provides necessary background on the stack-sorting map, the postorder traversal, and permutation statistics. We begin Section \ref{Sec:Main} with a simple proof that two permutations in $S_n$ are in the same orbit under the action of the polyurethane group $\mathscr P_n$ if and only if they have the same skeleton. We will also see that every orbit contains a unique $231$-avoiding permutation and a unique $132$-avoiding permutation. This allows us to give useful alternative definitions of two ``sliding operators" that were defined and used heavily in \cite{DefantCatalan} and \cite{DefantFertilityWilf}. We then give a new proof of one of the main theorems from \cite{Bouvel} that is much simpler than the original proof and neatly explains why the permutation statistics appearing in that theorem are actually there. Our proof yields a result that is much stronger than the original theorem, and it allows us to prove a conjecture of Bouvel and Guibert as a simple consequence. 

We prove that if $\pi,\pi'\in S_n$ have the same skeleton, then there is a skeleton-preserving bijection $\omega:\mathcal P^{-1}(\pi)\to \mathcal P^{-1}(\pi')$, where $\mathcal P$ is the postorder traversal defined in Section \ref{Sec:Preliminaries}. We then consider a theorem of Bousquet-M\'elou concerning ``sorted permutations" and ``canonical preimages." We give a vast generalization of her result in terms of what we call ``higher-order twisted stack-sorting operators." In fact, this generalization simultaneously subsumes Bousquet-M\'elou's theorem and the aforementioned conjecture of Bouvel and Guibert. We also show that the skeleton of a permutation in $S_n$ determines the skeleton of its canonical preimage (see Section \ref{Sec:Main} for definitions).

Finally, we will consider an interesting connection among the action of $\mathscr P_n$ on $S_n$, the group structure of $S_n$, and the stack-sorting map. This allows us to give a new description of one of the sliding operators from \cite{DefantCatalan} and \cite{DefantFertilityWilf}. We end with two open problems, one of which is the problem of determining the isomorphism type of the polyurethane group $\mathscr P_n$. 

\section{Preliminaries}\label{Sec:Preliminaries}

If $\pi$ is a permutation of a set of $n$ positive integers, then the \emph{normalization} (also call the \emph{standardization}) of $\pi$ is the permutation in $S_n$ obtained by replacing the $i^\text{th}$-smallest entry in $\pi$ with $i$ for all $i\in[n]$. We say a permutation is \emph{normalized} if it is equal to its normalization. Given $\tau\in S_m$, we say a permutation $\sigma=\sigma_1\cdots\sigma_n$ \emph{contains the pattern} $\tau$ if there exist indices $i_1<\cdots<i_m$ in $[n]$ such that the normalization of $\sigma_{i_1}\cdots\sigma_{i_m}$ is $\tau$. We say $\sigma$ \emph{avoids} $\tau$ if it does not contain $\tau$. Let $\Av(\tau^{(1)},\tau^{(2)},\ldots)$ denote the set of normalized permutations that avoid the patterns $\tau^{(1)},\tau^{(2)},\ldots$ (this list of patterns could be finite or infinite). A set of the form $\Av(\tau^{(1)},\tau^{(2)},\ldots)$ is called a \emph{permutation class}. Let $\Av_n(\tau^{(1)},\tau^{(2)},\ldots)=\Av(\tau^{(1)},\tau^{(2)},\ldots)\cap S_n$.  

In his seminal monograph \emph{The Art of Computer Programming}, Knuth \cite{Knuth} defined a certain \emph{stack-sorting algorithm}. His analysis of this algorithm led to several important advances in combinatorics and theoretical computer science, such as the \emph{kernel method} \cite{Banderier} and the notion of permutation pattern avoidance \cite{Bona,Kitaev}. In his dissertation, West \cite{West} defined a deterministic variant of Knuth's algorithm, which has now received a large amount of attention \cite{Bona, BonaWords, BonaSurvey, BonaSimplicial, BonaSymmetry, Bousquet98, Bousquet, Bouvel, BrandenActions, Branden3, Claesson, Cori, DefantCatalan, DefantCounting, DefantDescents, DefantEnumeration, DefantFertility, DefantFertilityWilf, DefantPostorder, DefantPreimages, DefantClass, DefantEngenMiller, DefantKravitz, Dulucq, Dulucq2, Fang, Goulden, Ulfarsson, West, Zeilberger}. This variant is a function $s$, called the \emph{stack-sorting map}. The function $s$ sends the empty permutation to itself. If $\pi$ is a nonempty permutation with largest entry $n$, then we can write $\pi=LnR$. We then define $s$ recursively by $s(\pi)=s(L)s(R)n$. For example, 
\begin{equation}\label{Eq5}
s(246153)=s(24)\,s(153)\,6=s(2)\,4\,s(1)\,s(3)\,5\,6=241356.
\end{equation}

Almost all questions that people have asked about the stack-sorting map can be phrased naturally in terms of preimages of permutations. West \cite{West} defined the \emph{fertility} of a permutation $\pi$ to be $|s^{-1}(\pi)|$. It follows from Knuth's analysis that the fertility of the identity permutation $123\cdots n$ is the $n^\text{th}$ Catalan number $C_n=\frac{1}{n+1}{2n\choose n}$. Indeed, Knuth showed that 
\begin{equation}\label{Eq1}
s^{-1}(123\cdots n)=\Av_n(231)\quad\text{and}\quad |\Av_n(231)|=C_n.
\end{equation} West also went through a great deal of effort to compute the fertilities of the permutations \[23\cdots k1(k+1)\cdots n,\quad 12\cdots(k-2)k(k-1)(k+1)\cdots n,\quad\text{and}\quad k12\cdots(k-1)(k+1)\cdots n,\] showing in particular that these fertilities are the same. Bousquet-M\'elou reproved the fact that the first and last permutations have the same fertility in \cite{Bousquet}, and the current author generalized this in \cite{DefantFertilityWilf} and \cite{DefantClass}.  

The articles \cite{Bouvel,Claesson,DefantClass,
DefantEnumeration,DefantFertilityWilf,DefantCounting} are concerned with stack-sorting preimages of permutation classes. One motivation for this line of work comes from the fact that $s^{-1}(\Av(231))$ is the set of $2$-stack-sortable permutations (see \cite{Bona,Bouvel,DefantCounting} for definitions). Another motivation comes from the fact that there are several permutation classes $\Av(\tau^{(1)},\tau^{(2)},\ldots)$ such that $s^{-1}(\Av(\tau^{(1)},\tau^{(2)},\ldots))$ is also a permutation class (see \cite{DefantClass,DefantEnumeration} for examples). The articles \cite{Bouvel,DefantClass,
DefantFertilityWilf} consider when the preimage sets of two permutation classes are counted by the same numbers, a phenomenon dating back to West that was named ``fertility Wilf equivalence" in \cite{DefantFertilityWilf}. These articles also consider permutation statistics that are jointly equidistributed on the various preimage sets. We will see that the action of $\mathscr P_n$ on $S_n$ yields a remarkably simple tool for analyzing fertility Wilf equivalence. 

We defined the in-order tree traversal $\mathcal I$ in the introduction. Another tree traversal, called the \emph{postorder traversal}, is defined on all decreasing plane trees. We read a decreasing plane tree in postorder by reading the subtrees of the root from left to right (each in postorder) and then reading the label of the root. Letting $\mathcal P(T)$ denote the postorder reading of a decreasing plane tree $T$, we find that $\mathcal P$ is a map from $\DPT$ to the set of all permutations. The fundamental link between the stack-sorting map and decreasing plane trees comes from the identity \cite{Bona}
\begin{equation}\label{Eq6}
s=\mathcal P\circ \mathcal I^{-1}.
\end{equation} 
For example, we have \[246153\xrightarrow{\mathcal I^{-1}}\begin{array}{l}\includegraphics[height=1.3cm]{SkeletonPIC3}\end{array}\xrightarrow{\,\,\,\mathcal P\,\,\,}241356,\] which agrees with \eqref{Eq5}. 

If $\mathscr T,\mathscr T'\subseteq\DPT$, then we say a map $\psi:\mathscr T\to\mathscr T'$ is \emph{skeleton-preserving} if $T$ and $\psi(T)$ have the same skeleton for all $T\in\mathscr T$. The article \cite{DefantFertilityWilf} considers when two permutation classes $\Av(\tau^{(1)},\tau^{(2)},\ldots)$ and $\Av(\tau'^{(1)},\tau'^{(2)},\ldots)$ are \emph{postorder Wilf equivalent}, which means that there exists a skeleton-preserving bijection \[\eta:\mathcal P^{-1}(\Av(\tau^{(1)},\tau^{(2)},\ldots))\to \mathcal P^{-1}(\Av(\tau'^{(1)},\tau'^{(2)},\ldots)).\]  As stressed in \cite{DefantFertilityWilf}, this is a very strong condition. 

Throughout this article, we are interested in joint equidistribution of permutation statistics. The following definition formalizes this notion. 

\begin{definition}\label{Def6} 
A \emph{permutation statistic} is a function from the set of normalized permutations to $\mathbb C$. Let $A$ and $A'$ be sets of normalized permutations, and let $\mathcal E$ be a set of permutation statistics. We say that the elements of $\mathcal E$ are \emph{jointly equidistributed on} $A$ \emph{and} $A'$ if there is a bijection $g:A\to A'$ such that $f(g(\pi))=f(\pi)$ for all $\pi\in A$ and all $f\in\mathcal E$. 
\end{definition} 

A \emph{descent} of a permutation $\pi=\pi_1\cdots\pi_n$ is an index $i\in[n-1]$ such that $\pi_i>\pi_{i+1}$. The \emph{descent set} of $\pi$, denoted $\Des(\pi)$, is the set of descents of $\pi$. Let $\LenDes$ denote the set of all permutation statistics $f$ such that $f(\pi)$ only depends on the length and descent set of $\pi$. A few such statistics (see \cite{Bouvel} or \cite{Claesson2} for their definitions) are $\des,\text{asc},\text{maj},\text{valley},\peak,\text{ddes},\text{dasc},\text{rir},\text{rdr},\text{lir},\text{ldr}$.
It is straightforward to show that an index $i$ is a descent of $\pi$ if and only if the vertex whose label is read $i^\text{th}$ in the in-order traversal of $\mathcal I^{-1}(\pi)$ has a right child. This means that the length and descent set of a permutation are determined by the permutation's skeleton.

\begin{definition}\label{Def1}
We say a permutation statistic $f$ is \emph{skeletal} if for every permutation $\pi$, $f(\pi)$ only depends on the skeleton of $\pi$.
\end{definition}

There are several important skeletal statistics that are not in $\LenDes$. A few examples (see \cite{Bouvel} or \cite{Claesson2} for their definitions) are $\rmax,\lmax,\indmax,\slmax,\slmax\circ\rev$. Theorem 3.2 in \cite{Bouvel} gives a list of several permutation statistics and states that the statistics in that list are jointly equidistributed on $s^{-1}(\Av_n(231))$ and $s^{-1}(\Av_n(132))$ for every $n\geq 1$. The statistics in the list appear somewhat arbitrary at first, but Theorem \ref{Thm2} below clarifies this matter, showing that all but one of those statistics appear in the list precisely because they are skeletal. The one remaining statistic is interesting; it appears in the list for a slightly different reason. 

We will see that the polyurethane action allows us to understand joint equidistribution of statistics on ``higher-order" preimages of permutations under $s$. In order to make this more precise, we make the following definition, which is motivated by the conjecture of Bouvel and Guibert mentioned above (which is stated below in \eqref{Eq4}). 

\begin{definition}\label{Def4}
Let $\rev$ denote the reversal operator defined on permutations by $\rev(\pi_1\cdots\pi_n)=\pi_n\cdots\pi_1$. A \emph{higher-order twisted stack-sorting operator} is a map of the form $\mathfrak s=\nu_m\circ\nu_{m-1}\circ\cdots\circ\nu_1$, where $\nu_1,\ldots,\nu_m\in\{s,\rev\}$ and $\nu_1=s$. 
\end{definition}

\section{Polyurethane Actions and Some Applications}\label{Sec:Main}

In the introduction, we defined the toggles $p_1,\ldots,p_{n-1}\in\mathfrak S_{\mathcal L( P)}$, where $ P$ is a poset with $n$ elements and $\mathcal L( P)$ is the set of all linear extensions of $ P$. We then said that we could view a binary plane tree as the Hasse diagram of a poset and use the in-order reading $\mathcal I$ to transfer these toggles so that they are defined on permutations. It will be convenient to have an equivalent definition of these maps that avoids any reference to linear extensions of posets. For $\pi\in S_n$ and $i\in[n-1]$, let $\swap_i(\pi)$ be the permutation obtained from $\pi$ by swapping the positions of $i$ and $i+1$. Similarly, for each decreasing plane tree $T$ on $[n]$, let $\swap_i(T)$ be the labeled tree obtained from $T$ by swapping the labels $i$ and $i+1$ (the resulting tree is not necessarily a decreasing plane tree). 

\begin{definition}\label{Def5}
Define the \emph{polyurethane toggle} $p_i\in\mathfrak S_{S_n}$ by \[p_i(\pi)=\begin{cases} \swap_i(\pi), & \mbox{if } \text{there is an entry }a>i+1\text{ appearing between }i\text{ and }i+1\text{ in }\pi; \\ \pi, & \mbox{otherwise.}  \end{cases}\] Let $\mathscr P_n=\langle p_1,\ldots,p_{n-1}\rangle$ be the subgroup of $\mathfrak S_{S_n}$ generated by $p_1,\ldots,p_{n-1}$. We call $\mathscr P_n$ the $n^\text{th}$ \emph{polyurethane group}.  
\end{definition}

For example, $p_2(3547126)=\swap_2(3547126)=2547136$ because the entry $4$ is larger than $3$ and lies between $2$ and $3$ in $3547126$. Note that $p_{n-1}$ is just the identity element of $\mathfrak S_{S_n}$. The elements of $\mathscr P_n$ preserve the skeletons of the permutations on which they act. In other words, any two permutations in $S_n$ that lie in the same $\mathscr P_n$-orbit must have the same skeleton. The following theorem shows that the converse is also true.  

\begin{theorem}\label{Thm1}
Two permutations in $S_n$ are in the same orbit under the action of $\mathscr P_n$ if and only if they have the same skeleton. The number of orbits of the $\mathscr P_n$-action on $S_n$ is the $n^\text{th}$ Catalan number $C_n=\frac{1}{n+1}{2n\choose n}$. Every orbit contains a unique $231$-avoiding permutation and a unique $132$-avoiding permutation.   
\end{theorem}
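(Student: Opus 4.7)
The plan is to reduce all three assertions to the single claim that every permutation can be toggled to a unique $231$-avoiding permutation with the same skeleton; the $132$ half will then follow by a reversal symmetry. Since toggles are already known to preserve skeletons, this claim immediately gives that orbits coincide with skeleton-classes; together with the classical facts that there are $C_n$ binary plane tree skeletons on $n$ vertices and that $|\Av_n(231)| = |\Av_n(132)| = C_n$, it would settle the counting and uniqueness statements.

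First I would establish that each skeleton admits a unique $231$-avoiding decreasing labeling. Using the standard characterization that a permutation $\pi = L\,n\,R$ is $231$-avoiding iff every entry of $L$ is smaller than every entry of $R$ (with $L$ and $R$ themselves $231$-avoiding), an induction on $n$ forces the labels on the left subtree of the root to be $\{1, \ldots, k-1\}$ and those on the right subtree to be $\{k, \ldots, n-1\}$, where $k$ is the in-order position of the root. The analogous unique $132$-avoiding labeling exists by a symmetric argument.

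Second I would show that if $\pi \in S_n$ contains a $231$-pattern, then some polyurethane toggle strictly decreases $\mathrm{inv}(\pi)$. Among all $231$-patterns in $\pi$, pick one with values $x < y < z$ at positions $i_1 < i_2 < i_3$ (so $\pi_{i_1} = y$, $\pi_{i_2} = z$, $\pi_{i_3} = x$) minimizing $y - x$. A short case analysis shows $y = x + 1$: if $y - x \geq 2$ and $w \in \{x+1, \ldots, y-1\}$ occurs at position $j$ in $\pi$, then according to whether $j < i_2$ or $j > i_2$, either the triple $(w, z, x)$ at positions $(j, i_2, i_3)$ or the triple $(y, z, w)$ at positions $(i_1, i_2, j)$ is a $231$-pattern of strictly smaller range, contradicting minimality. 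With $y = x + 1$, the pattern reads $(x+1, z, x)$ with $z > x+1$ between the entries $x+1$ and $x$, so $p_x$ acts nontrivially. Because $x$ and $x+1$ are consecutive integers, no other entry lies strictly between them in value, so swapping their positions alters only the inversion status of the pair $(i_1, i_3)$ itself, which flips from an inversion to a non-inversion; hence $\mathrm{inv}$ drops by exactly $1$.

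Combining these steps by induction on $\mathrm{inv}$, every orbit contains a $231$-avoiding permutation, which is unique by the first step. Hence the map sending each orbit to its $231$-avoiding representative is a bijection from orbits onto $\Av_n(231)$, producing $C_n$ orbits, and two permutations lie in the same orbit iff they share a skeleton. For the $132$-avoiding half, I would verify that $\rev$ commutes with every $p_i$ (since the condition defining a toggle is symmetric under reversal) and that $\rev$ sends $231$-patterns to $132$-patterns; the fact that each orbit contains a unique $132$-avoiding permutation then follows immediately from the $231$ version applied to the reversed orbit. I expect the main technical obstacle to be the range-minimization case analysis in the second step, though each subcase reduces to a one-line check once the configurations are laid out.
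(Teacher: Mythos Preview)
Your proposal is correct and follows the same core idea as the paper: toggle repeatedly until reaching a $231$-avoiding permutation. The differences are in the auxiliary arguments. The paper decreases the \emph{number of $231$ patterns} with each toggle, whereas you decrease $\mathrm{inv}$; both work, and your version gives the slightly sharper ``drops by exactly $1$'' statement. More notably, the paper gets uniqueness by a counting sandwich (at most $C_n$ orbits since $|\Av_n(231)|=C_n$, at least $C_n$ orbits since there are $C_n$ skeletons), while you prove directly that each skeleton carries a unique $231$-avoiding labeling and deduce the count afterward. Your route is a bit more work but more constructive; the paper's is slicker but relies on knowing both Catalan enumerations in advance. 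For the $132$ half the paper just says ``a similar argument,'' whereas your use of the commutation $p_i\circ\rev=\rev\circ p_i$ is a clean way to transfer the $231$ result without repeating it.
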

\begin{proof}
Suppose $\pi\in S_n$ contains the pattern $231$. It is straightforward to check that there must be some $i\in[n-1]$ and some $a\in[n]$ with $a>i+1$ such that $i+1$ appears to the left of $a$ in $\pi$ and $i$ appears to the right of $a$ in $\pi$. The permutation $p_i(\pi)$ contains strictly fewer copies of the pattern $231$ than $\pi$ does. This shows that we can repeatedly apply the polyurethane toggles until we eventually reach a $231$-avoiding permutation. Hence, every orbit of the $\mathscr P_n$-action contains at least one $231$-avoiding permutation. A similar argument shows that every orbit contains at least one $132$-avoiding permutation. We know from \eqref{Eq1} that there are $C_n$ $231$-avoiding permutations (and $C_n$ $132$-avoiding permutations) in $S_n$, so there are at most $C_n$ orbits. It is well known that $C_n$ is the number of (unlabeled) binary plane trees on $n$ vertices, so it is the number of skeletons of permutations in $S_n$. We saw above that any two permutations in the same orbit must have the same skeleton, so there are at least $C_n$ orbits. This proves that there are \emph{exactly} $C_n$ orbits. It follows that two permutations in $S_n$ with the same skeleton must be in the same orbit. Furthermore, each orbit contains a \emph{unique} $231$-avoiding permutation and a \emph{unique} $132$-avoiding permutation.  
\end{proof}

Many of the results in \cite{DefantCatalan} and \cite{DefantFertilityWilf} depend on ``sliding operators" $\swu, \swd:S_n\to S_n$. Those two articles give different equivalent definitions of these maps. We can give a third definition of $\swu$ and $\swd$ with the help of Theorem \ref{Thm1}; it is straightforward to check that the following definition is equivalent to the ones presented in \cite{DefantCatalan} and \cite{DefantFertilityWilf}. 

\begin{definition}\label{Def2}
Given $\pi\in S_n$, let $\swu(\pi)$ be the unique $132$-avoiding permutation with the same skeleton as $\pi$. Let $\swd(\pi)$ be the unique $231$-avoiding permutation with the same skeleton as $\pi$. 
\end{definition}

\begin{remark}\label{Rem1}
We can restrict the maps $\swu$ and $\swd$ to $\Av_n(231)$ and $\Av_n(132)$, respectively. It is clear that $\swu:\Av_n(231)\to\Av_n(132)$ and $\swd:\Av_n(132)\to\Av_n(231)$ are inverse bijections that preserve skeletons. 
\end{remark}

Settling a conjecture of Claesson, Dukes, and Steingrimsson, Bouvel and Guibert \cite{Bouvel} proved that the permutation classes $\Av(231)$ and $\Av(132)$ are fertility Wilf equivalent, meaning that $|s^{-1}(\Av_n(231))|=|s^{-1}(\Av_n(132))|$ for every $n\geq 1$. In fact, they proved the much stronger assertion that the permutation statistics in the set 
\begin{equation}\label{Eq8}
\LenDes\cup\{\rmax,\lmax,\zeil,\indmax,\slmax,\slmax\circ\rev\}
\end{equation} 
are jointly equidistributed on $s^{-1}(\Av_n(231))$ and $s^{-1}(\Av_n(132))$ for every $n\geq 1$ (see \cite{Bouvel} for the definitions of these statistics). They also conjectured that 
\begin{equation}\label{Eq4}
|\mathfrak s^{-1}(\Av_n(231))|=|\mathfrak s^{-1}(\Av_n(132))|\text{ for every higher-order twisted stack-sorting operator }\mathfrak s
\end{equation} (see Definition \ref{Def4}), and they suggested that the statistics in \eqref{Eq8} might also be jointly equidistributed on $\mathfrak s^{-1}(\Av_n(231))$ and $\mathfrak s^{-1}(\Av_n(132))$. One especially notable statistic appearing in \eqref{Eq8} is the \emph{Zeilberger statistic} $\zeil$, which originated in Zeilberger's study of $2$-stack-sortable permutations \cite{Zeilberger} and has received attention in subsequent articles such as \cite{Bousquet98, Bouvel, Claesson2, DefantFertilityWilf}. For $\pi\in S_n$, $\zeil(\pi)$ is defined to be the largest integer $m$ such that the entries $n,n-1,\ldots,n-m+1$ appear in decreasing order in $\pi$. All of the statistics in \eqref{Eq8} except $\zeil$ are skeletal. 

Bouvel and Guibert gave a somewhat complicated proof of the above equidistribution result using generating trees. In Theorem~\ref{Thm2} below, we obtain a more general version of their theorem and prove their conjecture via a simple application of the polyurethane action. We first need the following lemma, which is the primary reason why the polyurethane toggles are useful for studying the stack-sorting map. 

\begin{lemma}\label{Lem1}
If $\sigma\in S_n$ and $i\in[n-1]$ are such that $p_i(s(\sigma))=\swap_i(s(\sigma))$, then $p_i(\sigma)=\swap_i(\sigma)$ and $s(p_i(\sigma))=p_i(s(\sigma))$.
\end{lemma}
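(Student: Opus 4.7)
My plan is to translate the entire setup into the decreasing binary plane tree $T = \mathcal I^{-1}(\sigma)$, so that $\sigma$ is the in-order reading $\mathcal I(T)$ and, by \eqref{Eq6}, $s(\sigma)$ is the postorder reading $\mathcal P(T)$. The polyurethane toggle condition ``there is an entry $a > i+1$ between $i$ and $i+1$'' then becomes a statement about which vertex labels of $T$ appear between the vertices labeled $i$ and $i+1$ in a specified traversal of $T$.

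The crucial step will be to deduce from the hypothesis that the vertex of $T$ labeled $i+1$ is \emph{not} an ancestor of the vertex labeled $i$. I would argue by contrapositive. Since labels decrease along every root-to-leaf path, the vertex labeled $i$ cannot be an ancestor of the vertex labeled $i+1$. If the vertex labeled $i+1$ were an ancestor of the vertex labeled $i$, then the vertex labeled $i$ would lie in some subtree of the vertex labeled $i+1$, and every vertex read strictly between these two in postorder would also lie in a subtree of the vertex labeled $i+1$ and hence carry a label strictly less than $i+1$; this would contradict the existence of an entry greater than $i+1$ appearing between them in $s(\sigma) = \mathcal P(T)$.

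Once this structural fact is in hand, both conclusions drop out cleanly. Since the vertex labeled $i+1$ is not an ancestor of the vertex labeled $i$, their least common ancestor carries a label greater than $i+1$ and is read between them in in-order, so $p_i(\sigma) = \swap_i(\sigma)$. Moreover, swapping the labels $i$ and $i+1$ in $T$ produces a valid decreasing binary plane tree $\swap_i(T)$ with the same skeleton as $T$; tracing the in-order traversal gives $\mathcal I^{-1}(\swap_i(\sigma)) = \swap_i(T)$, and therefore $s(p_i(\sigma)) = \mathcal P(\swap_i(T))$. Because $\swap_i(T)$ has identical tree structure to $T$, its postorder reading is obtained from $\mathcal P(T)$ by interchanging the labels $i$ and $i+1$, so $\mathcal P(\swap_i(T)) = \swap_i(s(\sigma)) = p_i(s(\sigma))$.

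The only real obstacle is the single conceptual observation in the middle paragraph: one has to notice that an ancestor-descendant relationship between the vertices labeled $i$ and $i+1$ would force the postorder ``gap'' between them to contain only labels smaller than $i+1$, because postorder reads an entire subtree before its root. Everything else is a transparent bookkeeping exercise about how in-order and postorder traversals interact with label swaps.
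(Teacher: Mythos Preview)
Your argument is correct and follows essentially the same route as the paper's proof: translate to the decreasing binary plane tree $T=\mathcal I^{-1}(\sigma)$, use the postorder hypothesis to rule out an ancestor--descendant relation between the vertices labeled $i$ and $i+1$, and then observe that $\swap_i$ commutes with both $\mathcal I$ and $\mathcal P$ on $T$. The only difference is that you spell out in full the two steps the paper leaves as ``it follows from the definition of the postorder traversal''---namely the contrapositive postorder argument and the least-common-ancestor justification for $p_i(\sigma)=\swap_i(\sigma)$.
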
 
\begin{proof}
If $p_i(s(\sigma))=\swap_i(s(\sigma))$, then there must be an entry lying between $i$ and $i+1$ in $s(\sigma)$ that is larger than $i+1$. Since $s(\sigma)=\mathcal P(\mathcal I^{-1}(\sigma))$ by \eqref{Eq6}, it follows from the definition of the postorder traversal and the fact that $\mathcal I^{-1}(\sigma)$ is a decreasing plane tree that $i$ is not a descendant of $i+1$ in $\mathcal I^{-1}(\sigma)$. This means that there is an entry lying between $i$ and $i+1$ in $\sigma$ that is larger than $i+1$, so $p_i(\sigma)=\swap_i(\sigma)$. We have $\mathcal I^{-1}(p_i(\sigma))=\swap_i(\mathcal I^{-1}(\sigma))$, so it follows from \eqref{Eq6} and the definition of the postorder traversal that \[s(p_i(\sigma))=\mathcal P(\mathcal I^{-1}(p_i(\sigma)))=\swap_i(\mathcal P(\mathcal I^{-1}(\sigma)))=\swap_i(s(\sigma))=p_i(s(\sigma)). \qedhere\]  
\end{proof}
\begin{theorem}\label{Thm2}
If $\mathfrak s$ is a higher-order twisted stack-sorting operator and $\pi,\pi'\in S_n$ have the same skeleton, then $\zeil$ and all of the skeletal statistics are jointly equidistributed on $\mathfrak s^{-1}(\pi)$ and $\mathfrak s^{-1}(\pi')$. In particular, $\zeil$ and all of the skeletal statistics are jointly equidistributed on $\mathfrak s^{-1}(\Av_n(231))$ and $\mathfrak s^{-1}(\Av_n(132))$.
\end{theorem}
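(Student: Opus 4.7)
The plan is to reduce to the case $\pi'=p_i(\pi)$ for a single polyurethane toggle $p_i$ acting non-trivially on $\pi$; Theorem \ref{Thm1} supplies a path of such toggles within the shared $\mathscr P_n$-orbit, and composing the per-step bijections yields the global one. In this one-step case, I define $\Phi:\mathfrak s^{-1}(\pi)\to\mathfrak s^{-1}(\pi')$ by $\Phi(\sigma)=p_i(\sigma)$ and check that $\Phi$ is a bijection preserving the skeleton (hence every skeletal statistic) and $\zeil$.

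Write $\mathfrak s=\nu_m\circ\cdots\circ\nu_1$ with $\nu_1=s$. Because the defining condition for $p_i$ to be active is reversal-invariant, $p_i$ commutes with $\rev$; combined with Lemma \ref{Lem1}, this gives two propagation principles for the factors $\nu_j$: activity of $p_i$ travels backward through $\mathfrak s$, and inactivity travels forward. Starting from the hypothesis that $p_i$ is active on $\pi=\mathfrak s(\sigma)$, backward propagation yields $\mathfrak s(p_i(\sigma))=p_i(\mathfrak s(\sigma))=\pi'$, so $\Phi$ is well-defined. Since $p_i$ is a skeleton-preserving involution on $S_n$, $\Phi$ is then automatically a bijection preserving all skeletal statistics.

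The heart of the argument is $\zeil$-preservation. Set $m=\zeil(\sigma)$. In $\mathcal I^{-1}(\sigma)$, the labels $n,n-1,\ldots,n-m+1$ occupy the first $m$ vertices $v_0,v_1,\ldots,v_{m-1}$ of the right spine, with $v_k$ having label $n-k$. Since $\swap_i$ moves only the values $i$ and $i+1$, it suffices to prove $\{i,i+1\}\subseteq\{1,\ldots,n-m-1\}$: in that case $\swap_i$ fixes every position of every value in $\{n-m,n-m+1,\ldots,n\}$, and $\zeil$ is preserved. I rule out the complementary cases. If $i\ge n-m+1$, both labels lie on the right spine at consecutive depths, with $i+1$ the parent of $i$, so $p_i$ is trivial on $\sigma$; forward propagation makes $p_i$ trivial on $\pi$, a contradiction. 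If $i+1\in\{n-m,n-m+1\}$, the postorder decomposition along the spine,
\[s(\sigma)=\mathcal P(T_0)\,\mathcal P(T_1)\cdots\mathcal P(T_r)\,v_r\,v_{r-1}\cdots v_0\]
(where $T_k$ is the left subtree of $v_k$), places the values $n-m+1,n-m+2,\ldots,n$ in the last $m$ positions of $s(\sigma)$, while both $i$ and $i+1$ sit in positions at most $n-m$. Hence no value exceeding $i+1$ can separate $i$ and $i+1$ in $s(\sigma)$, so $p_i$ is trivial on $s(\sigma)$; propagating forward through $\nu_2,\ldots,\nu_m$ again contradicts the hypothesis.

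Finally, the ``in particular'' conclusion follows by taking the disjoint union, over $\pi\in\Av_n(231)$, of the bijections $\mathfrak s^{-1}(\pi)\to\mathfrak s^{-1}(\swu(\pi))$ just constructed, using the skeleton-preserving bijection $\swu:\Av_n(231)\to\Av_n(132)$ from Remark \ref{Rem1}. The main obstacle is the case analysis for $\zeil$-preservation, where the tree-theoretic description of $\zeil$ and the postorder decomposition of $s(\sigma)$ along the right spine are both essential.
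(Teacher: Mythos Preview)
Your reduction to a single toggle $\pi'=p_i(\pi)$ and your construction of the bijection $\Phi=p_i$ via backward/forward propagation through the factors of $\mathfrak s$ is essentially the paper's argument (the paper phrases the same thing as an induction on the prefix $\varphi_j$). Where you genuinely diverge is the $\zeil$ step. The paper invokes the identity $\zeil(\lambda)=\min\{\rmax(\lambda),\tl(s(\lambda))\}$ from \cite{DefantFertilityWilf} and finishes in one line: $\rmax$ and $\tl$ are skeletal, and $s(p_i(\sigma))=p_i(s(\sigma))$ has already been established. Your direct tree-theoretic case analysis---characterising $\zeil(\sigma)=m$ via the right spine of $\mathcal I^{-1}(\sigma)$ and ruling out $i\ge n-m-1$ by showing $p_i$ would then be inactive on $\sigma$ or on $s(\sigma)$---is a correct, self-contained alternative that avoids the external citation, at the price of more casework.

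One small slip to fix: in the subcase $i+1=n-m+1$, the value $i+1$ lies in position $n{-}m{+}1$ of $s(\sigma)$, not in a position at most $n-m$ as you write. Your conclusion survives---between $i$ (somewhere among the first $n-m$ positions) and $i+1$ (at position $n{-}m{+}1$) every value is at most $n-m<i+1$, so $p_i$ is still inactive on $s(\sigma)$---but the sentence should treat the two subcases $i+1=n-m$ and $i+1=n-m+1$ separately.
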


\begin{proof}
Theorem \ref{Thm1} tells us that $\pi$ and $\pi'$ are in the same $\mathscr P_n$-orbit. Since $\mathscr P_n$ is generated by the polyurethane toggles $p_1,\ldots,p_{n-1}$, it suffices to prove the first statement of the theorem in the case in which $\pi'=p_i(\pi)$ for some $i\in[n-1]$. The proof is trivial if $\pi=\pi'$, so we can assume $\pi'=\swap_i(\pi)$. 

According to Definition \ref{Def4}, $\mathfrak s=\nu_m\circ\nu_{m-1}\circ\cdots\circ\nu_1$ for some $\nu_1,\ldots,\nu_m\in\{s,\rev\}$ with $\nu_1=s$. Let $\varphi_0:S_n\to S_n$ be the identity map. For $j\in\{1,\ldots,m\}$, let $\varphi_j=\varphi_{j-1}\circ\nu_{m+1-j}$. We prove by induction on $j$ that $p_i(\varphi_j^{-1}(\pi))=\varphi_j^{-1}(\pi')$ for all $j\in\{0,1,\ldots,m\}$. The base case $j=0$ is trivial, so assume that $j\in\{1,\ldots,m\}$ and that $p_i(\varphi_{j-1}^{-1}(\pi))=\varphi_{j-1}^{-1}(\pi')$. It is easy to see that the involutions $p_i,\rev\in\mathfrak S_{S_n}$ commute. Therefore, if $\nu_{m+1-j}=\rev$, we have \[p_i(\varphi_j^{-1}(\pi))=p_i(\rev(\varphi_{j-1}^{-1}(\pi)))=\rev(p_i(\varphi_{j-1}^{-1}(\pi)))=\rev(\varphi_{j-1}^{-1}(\pi'))=\varphi_j^{-1}(\pi')\] as desired. Now assume $\nu_{m+1-j}=s$. Choose $\sigma\in\varphi_j^{-1}(\pi)$. We have $s(\sigma)\in\varphi_{j-1}^{-1}(\pi)$, so $p_i(s(\sigma))\in\varphi_{j-1}^{-1}(\pi')$. Since $\pi\neq\pi'$, this implies that $s(\sigma)\neq p_i(s(\sigma))$. By the definition of $p_i$, we must have $p_i(s(\sigma))=\swap_i(s(\sigma))$. We can now use Lemma \ref{Lem1} to see that $s(p_i(\sigma))=p_i(s(\sigma))\in\varphi_{j-1}^{-1}(\pi')$. Thus, $p_i(\sigma)\in s^{-1}(\varphi_{j-1}^{-1}(\pi'))=\varphi_j^{-1}(\pi')$. As $\sigma$ was arbitrary, this proves that $p_i(\varphi_j^{-1}(\pi))\subseteq\varphi_j^{-1}(\pi')$. Since $\pi=p_i(\pi')$, we can use the same argument with the roles of $\pi$ and $\pi'$ interchanged to prove the reverse containment. This completes the inductive step. In the following paragraph, we will make use of the fact, which we just proved, that 
\begin{equation}\label{Eq2}
s(p_i(\sigma))=p_i(s(\sigma))\text{ whenever }\sigma\in\varphi_j^{-1}(\pi)\text{ and }\nu_{m+1-j}=s.
\end{equation}

Now that we have proven that $p_i(\varphi_j^{-1}(\pi))=\varphi_j^{-1}(\pi')$ for all $j\in\{0,1,\ldots,m\}$, we can set $j=m$ to see that $p_i(\mathfrak s^{-1}(\pi))=\mathfrak s^{-1}(\pi')$. We know that $f(p_i(\sigma))=f(\sigma)$ for every $\sigma\in\mathfrak s^{-1}(\pi)$ and every skeletal statistic $f$. In order to complete the proof of the first statement of the theorem, we need to show that $\zeil(p_i(\sigma))=\zeil(\sigma)$ for every $\sigma\in\mathfrak s^{-1}(\pi)$. For this, we appeal to Lemma 3.1 in \cite{DefantFertilityWilf}, which states that $\zeil(\lambda)=\min\{\rmax(\lambda),\tl(s(\lambda))\}$ for every $\lambda\in S_n$. Here, $\tl$ is the ``tail length" statistic and $\rmax(\lambda)$ is the number of right-to-left maxima of $\lambda$. For the purposes of this proof, we only need the fact that $\tl$ and $\rmax$ are skeletal statistics (see \cite{DefantFertilityWilf} for more details). Choose $\sigma\in\mathfrak s^{-1}(\pi)$. Since $\rmax$ is skeletal, $\rmax(p_i(\sigma))=\rmax(\sigma)$. Noting that $\mathfrak s=\varphi_m$ and $\nu_1=s$ (by Definition \ref{Def4}), we can use \eqref{Eq2} to see that $s(p_i(\sigma))=p_i(s(\sigma))$. We now use the fact that $\tl$ is skeletal to see that $\tl(s(p_i(\sigma)))=\tl(p_i(s(\sigma)))=\tl(s(\sigma))$. This proves that \[\zeil(p_i(\sigma))=\min\{\rmax(p_i(\sigma)),\tl(s(p_i(\sigma)))\}=\min\{\rmax(\sigma),\tl(s(\sigma))\}=\zeil(\sigma).\] 

Recall from Remark \ref{Rem1} that $\swu:\Av_n(231)\to\Av_n(132)$ is a skeleton-preserving bijection. For each $\pi\in\Av_n(231)$, we can use the first statement of the theorem to see that $\zeil$ and all of skeletal statistics are jointly equidistributed on $\mathfrak s^{-1}(\pi)$ and $\mathfrak s^{-1}(\swu(\pi))$. It follows that these statistics are jointly equidistributed on $\mathfrak s^{-1}(\Av_n(231))$ and $\mathfrak s^{-1}(\Av_n(132))$. 
\end{proof}

If we appeal to a result from \cite{DefantFertilityWilf}, we can extensively generalize the second part of Theorem \ref{Thm2}. 
Given $\pi\in S_n$, let 
\[\chi_m(\pi)=\begin{cases}  (n+m-1)\cdots(n+3)(n+1)\pi(n+2)(n+4)\cdots(n+m) & \mbox{if } m\equiv 0\pmod 2; \\  (n+m)\cdots(n+3)(n+1)\pi(n+2)(n+4)\cdots(n+m-1)& \mbox{if } m\equiv 1\pmod 2. \end{cases}\]
For example, 
\[\chi_5(132)=86413257,\quad\text{and}\quad \chi_6(132)=864132579.\]
Let 
\begin{equation}\label{Eq7}
\mathcal A=\bigcup_{m\geq 0}\{\chi_m(1),\chi_m(12),\chi_m(1423),\chi_m(2143)\}.
\end{equation} 
Let $\tau^{(1)},\tau^{(2)},\ldots$ be a (possibly empty) list of permutations taken from the set $\mathcal A$, and let $\tau'^{(i)}=\swu(\tau^{(i)})$ for all $i$. In \cite{DefantFertilityWilf}, it is proven that \[\swu(\Av(231,\tau^{(1)},\tau^{(2)},\ldots))=\Av(132,\tau'^{(1)},\tau'^{(2)},\ldots).\] Applying the first part of Theorem \ref{Thm2} to each $\pi\in\Av(231,\tau^{(1)},\tau^{(2)},\ldots)$, we obtain the following result. 

\begin{theorem}\label{Thm6}
Let $\mathfrak s$ be a higher-order twisted stack-sorting operator. If $\tau^{(1)},\tau^{(2)},\ldots$ is a list of permutations taken from the set $\mathcal A$ in \eqref{Eq7}, then $\zeil$ and all of the skeletal statistics are jointly equidistributed on $\mathfrak s^{-1}(\Av(\tau^{(1)},\tau^{(2)},\ldots))$ and $\mathfrak s^{-1}(\Av(\tau'^{(1)},\tau'^{(2)},\ldots))$. 
\end{theorem}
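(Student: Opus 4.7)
The plan is to deduce Theorem~\ref{Thm6} as an essentially immediate consequence of the first part of Theorem~\ref{Thm2} together with the $\swu$-image identity $\swu(\Av(231,\tau^{(1)},\tau^{(2)},\ldots))=\Av(132,\tau'^{(1)},\tau'^{(2)},\ldots)$ from \cite{DefantFertilityWilf} recalled just above the theorem. The overall strategy should mirror the short argument at the end of the proof of Theorem~\ref{Thm2} that lifted the fiberwise equidistribution to the single-pattern conclusion about $\Av_n(231)$ versus $\Av_n(132)$.

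First I would record that, by Definition~\ref{Def2} together with the identity above, $\swu$ restricts to a skeleton-preserving bijection from the relevant $231$-avoiding class to the relevant $132$-avoiding class. Then, for each permutation $\pi$ in the source class, I would apply the first part of Theorem~\ref{Thm2} to the pair $(\pi,\swu(\pi))$ --- which have the same skeleton --- to produce a bijection $g_\pi\colon\mathfrak s^{-1}(\pi)\to\mathfrak s^{-1}(\swu(\pi))$ that preserves $\zeil$ and every skeletal statistic. Since the sets $\mathfrak s^{-1}(\pi)$, indexed by $\pi\in\Av(231,\tau^{(1)},\tau^{(2)},\ldots)$, partition $\mathfrak s^{-1}(\Av(231,\tau^{(1)},\tau^{(2)},\ldots))$ and, correspondingly, the sets $\mathfrak s^{-1}(\swu(\pi))$ partition $\mathfrak s^{-1}(\Av(132,\tau'^{(1)},\tau'^{(2)},\ldots))$, gluing the $g_\pi$ yields a single statistic-preserving bijection between the two preimage sets in question.

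The argument contains no genuine obstacle: Theorem~\ref{Thm2} and the $\swu$-image identity from \cite{DefantFertilityWilf} together do all the heavy lifting, and the only remaining check is the trivial observation that passing from fiberwise joint equidistribution to joint equidistribution on a disjoint union preserves every statistic simultaneously. In that sense Theorem~\ref{Thm6} is exactly the corollary advertised by the sentence preceding its statement, and writing the proof amounts essentially to stating it in this two-step form.
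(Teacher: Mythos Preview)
Your proposal is correct and follows exactly the approach sketched in the paper: use the identity $\swu(\Av(231,\tau^{(1)},\tau^{(2)},\ldots))=\Av(132,\tau'^{(1)},\tau'^{(2)},\ldots)$ from \cite{DefantFertilityWilf} together with the first part of Theorem~\ref{Thm2} applied fiberwise to each $\pi$, then glue the resulting bijections. The paper does not even include a separate proof environment for Theorem~\ref{Thm6}; the sentence ``Applying the first part of Theorem~\ref{Thm2} to each $\pi\in\Av(231,\tau^{(1)},\tau^{(2)},\ldots)$, we obtain the following result'' immediately preceding the statement \emph{is} the proof, and your write-up simply unpacks it.
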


We now prove the surprising fact that the skeleton of a permutation $\pi$ determines the \emph{entire} set of rooted plane trees appearing as skeletons of trees in $\mathcal P^{-1}(\pi)$. The polyurethane action makes the proof remarkably simple. 

\begin{theorem}\label{Thm4}
If $\pi,\pi'\in S_n$ have the same skeleton, then there exists a skeleton-preserving bijection \[\omega:\mathcal P^{-1}(\pi)\to \mathcal P^{-1}(\pi').\] 
\end{theorem}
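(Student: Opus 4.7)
The plan is to mimic the strategy used in the proof of Theorem~\ref{Thm2}. By Theorem~\ref{Thm1}, $\pi$ and $\pi'$ lie in the same orbit under $\mathscr{P}_n$, so it suffices to construct $\omega$ in the case $\pi' = p_i(\pi)$ for some $i \in [n-1]$. If $p_i$ fixes $\pi$, we can take $\omega$ to be the identity map on $\mathcal{P}^{-1}(\pi)$, so the only interesting case is $\pi' = \swap_i(\pi) \neq \pi$. In that case I would simply define $\omega$ by $\omega(T) = \swap_i(T)$, where $\swap_i(T)$ is the labeled tree obtained from $T$ by swapping the labels $i$ and $i+1$. Since relabeling does not alter the underlying unlabeled plane tree, $\omega$ is automatically skeleton-preserving, and it is an involution onto its image, so the only things left to check are that $\swap_i(T)$ is a bona fide decreasing plane tree and that $\mathcal{P}(\swap_i(T)) = \pi'$.

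Both verifications are essentially the tree-level half of the proof of Lemma~\ref{Lem1}, carried out for an arbitrary decreasing plane tree rather than just for a decreasing binary plane tree read via $\mathcal{I}$. The assumption $\pi' = \swap_i(\pi) \neq \pi$ forces some entry $a > i+1$ to lie strictly between $i$ and $i+1$ in $\pi = \mathcal{P}(T)$. Because the postorder traversal reads every subtree as one contiguous block, if $i$ were a descendant of $i+1$ in $T$, then every entry lying between $i$ and $i+1$ in $\mathcal{P}(T)$ would also be a descendant of $i+1$ and hence strictly smaller than $i+1$, contradicting the existence of $a$. Since $T$ is decreasing, $i+1$ also cannot be a descendant of $i$, so $i$ and $i+1$ are incomparable in $T$, and $\swap_i(T)$ is a legitimate decreasing plane tree. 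The identity $\mathcal{P}(\swap_i(T)) = \swap_i(\mathcal{P}(T)) = \swap_i(\pi) = \pi'$ then follows immediately because $\mathcal{P}$ outputs labels in an order determined solely by the tree structure.

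The main obstacle is really just recognizing that Lemma~\ref{Lem1} already contains the essential tree-structural argument in disguise: although it is stated in terms of $s = \mathcal{P} \circ \mathcal{I}^{-1}$, the only property of $\mathcal{I}^{-1}(\sigma)$ that is actually used in its proof is that this object is a decreasing plane tree whose postorder reading equals $s(\sigma)$. Applying that same reasoning directly to an arbitrary $T \in \mathcal{P}^{-1}(\pi)$ bypasses any reference to $\mathcal{I}$ or $\mathsf{DBPT}$ and yields the desired bijection immediately. Given this observation, the rest of the argument is a brief bookkeeping exercise, so I expect the full proof to occupy at most a short paragraph.
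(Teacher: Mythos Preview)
Your proposal is correct and matches the paper's proof essentially line for line: reduce via Theorem~\ref{Thm1} to the case $\pi'=\swap_i(\pi)$, set $\omega=\swap_i$, use the contiguous-block property of postorder together with the decreasing labeling to see that $i$ and $i+1$ are incomparable in $T$, and conclude $\mathcal P(\swap_i(T))=\swap_i(\pi)$. The one thing you leave implicit is surjectivity of $\omega$ onto $\mathcal P^{-1}(\pi')$; the paper handles this (and you should too) by noting that the same argument with the roles of $\pi$ and $\pi'$ interchanged gives the reverse containment, whence $\swap_i(\mathcal P^{-1}(\pi))=\mathcal P^{-1}(\pi')$.
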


\begin{proof}
We know by Theorem \ref{Thm1} that $\pi$ and $\pi'$ are in the same $\mathscr P_n$-orbit. Because $\mathscr P_n$ is generated by $p_1,\ldots,p_{n-1}$, it suffices to prove the theorem in the case in which $\pi'=p_i(\pi)$ for some $i\in[n-1]$. The proof is trivial if $\pi=\pi'$, so we can assume $\pi'=\swap_i(\pi)$. Choose $T\in \mathcal P^{-1}(\pi)$. There must be an entry lying between $i$ and $i+1$ in $\pi$ that is larger than $i+1$. It follows from the definition of the postorder traversal and the fact that $T$ is a decreasing plane tree that $i$ is not a descendant of $i+1$ in $T$. This means that $\swap_i(T)$ is a decreasing plane tree. Furthermore, $\mathcal P(\swap_i(T))=\swap_i(\mathcal P(T))=\swap_i(\pi)=\pi'$. As $T$ was arbitrary, this shows that $\swap_i(\mathcal P^{-1}(\pi))\subseteq \mathcal P^{-1}(\pi')$. We can repeat this argument with the roles of $\pi$ and $\pi'$ interchanged to prove that $\swap_i(\mathcal P^{-1}(\pi))=\mathcal P^{-1}(\pi')$. We now put $\omega=\swap_i$ to complete the proof.   
\end{proof}

Most familiar skeletal permutation statistics (des, peak, maj, rmax, slmax, etc.) are easily seen to be skeletal. Theorem \ref{Thm2} provides us with several interesting skeletal statistics that are not at all obviously skeletal. One such statistic is the fertility statistic itself! We define this statistic by $\fer(\pi)=|s^{-1}(\pi)|$. More generally, if $\mathfrak s$ is a higher-order twisted stack-sorting operator, then we can define the \emph{higher-order twisted fertility statistic} $\fer_{\mathfrak s}$ by $\fer_{\mathfrak s}(\pi)=|\mathfrak s^{-1}(\pi)|$. Theorem \ref{Thm2} tells us that all of these statistics are skeletal. Another interesting statistic, which was introduced in \cite{DefantCatalan} in order to understand so-called \emph{uniquely sorted permutations}, is the \emph{deficiency statistic} $\de$. Bousquet-M\'elou \cite{Bousquet} defined a permutation to be \emph{sorted} if its fertility is positive. If $\pi\in S_n$, then $\de(\pi)$ is defined to be the smallest nonnegative integer $\ell$ such that $\pi(n+1)(n+2)\cdots(n+\ell)$ is sorted. Because we now know that the fertility statistic $\fer$ is skeletal, it is easy to verify that $\de$ is also skeletal.  

Bousquet-M\'elou \cite{Bousquet} defined a decreasing binary plane tree to be \emph{canonical} if every vertex $v$ that has a left child also has a nonempty right subtree $T_v^R$ such that the first entry in $\mathcal I(T_v^R)$ is smaller than the label of the left child of $v$. She defined a permutation $\pi$ to be canonical if $\mathcal I^{-1}(\pi)$ is canonical. She then proved the following result. 

\begin{theorem}[\hspace{-.01cm}\cite{Bousquet}]\label{Thm3}
For every sorted permutation $\pi$, there is a unique canonical permutation $\sigma\in s^{-1}(\pi)$. Moreover, the fertility of $\pi$ and the set of binary plane trees that are skeletons of elements of $s^{-1}(\pi)$ only depend on the skeleton of $\sigma$. 
\end{theorem}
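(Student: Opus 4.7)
My plan is to prove Theorem~\ref{Thm3} by induction on $n$, in two parts.

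For the existence and uniqueness of a canonical preimage, I would use the following observation. Any $\sigma\in s^{-1}(\pi)$ corresponds to a decreasing binary plane tree $T=\mathcal I^{-1}(\sigma)$ with $\mathcal P(T)=\pi$; since a postorder word ends at its root's label and the root carries the maximum label, $\pi_n=n$ and the root of $T$ must be labeled $n$. Thus $T$ is determined by how $\pi_1\cdots\pi_{n-1}$ splits as $\alpha\beta$ into the postorders of the left and right subtrees. I would argue that the canonical condition at the root, combined with the inductive construction on $\beta$, pins down this split uniquely: the root of the left subtree is the last letter of $\alpha$, and the first in-order letter of the right subtree is recursively readable from $\beta$, so the canonical inequality becomes a deterministic cut rule. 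Invoking the inductive hypothesis on $\alpha$ and $\beta$ then produces the unique canonical $\sigma$.

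For the second part, I would combine Theorem~\ref{Thm1} with a direct bijective argument. Any two canonical $\sigma,\sigma'$ with the same skeleton lie in a common $\mathscr P_n$-orbit, so they are linked by a chain of polyurethane toggles; choosing a chain that stays inside the canonical locus (which is possible because Part~1 supplies a canonical preimage for each sorted image along the way) reduces the problem to the case $\sigma'=p_i(\sigma)$ for a single toggle. Setting $\pi=s(\sigma)$ and $\pi'=s(\sigma')$, we have $\pi'=\swap_i(\pi)$ as permutations, but $p_i$ need not act nontrivially on $\pi$, so $\pi,\pi'$ can lie in different $\mathscr P_n$-orbits and Theorem~\ref{Thm2} does not apply directly. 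Instead, I would define a candidate bijection $s^{-1}(\pi)\to s^{-1}(\pi')$ by $\tau\mapsto\swap_i(\tau)$. Since $\swap_i$ commutes with $\mathcal I$ and with $\mathcal P$ at the tree level, this map sends $s^{-1}(\pi)$ into $s^{-1}(\pi')$ and preserves skeletons, provided that for every $T\in\mathcal P^{-1}(\pi)\cap\mathsf{DBPT}$, the labels $i$ and $i+1$ are not in a parent-child relation in $T$ (so that $\swap_i(T)$ remains decreasing). Composing these bijections along the toggle chain then yields simultaneously the equality of fertilities and the equality of skeleton sets claimed by the theorem.

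The main obstacle is verifying the parent-child claim for every preimage tree, particularly when $p_i$ acts trivially on $\pi$. When $p_i$ acts nontrivially on $\pi$ the claim is automatic, because the postorder analysis from the proof of Lemma~\ref{Lem1} shows that if $i+1$ were the parent of $i$ in some preimage tree, then every entry between $i$ and $i+1$ in $\pi$ would have to lie in the subtree of $i+1$ and hence be smaller than $i+1$, contradicting nontriviality. When $p_i$ acts trivially on $\pi$ one must use canonicality of $\sigma$ essentially: I would argue by contradiction that if some preimage tree placed $i+1$ as the parent of $i$, then applying the canonical-split recursion of Part~1 to $\pi$ would force $\mathcal I^{-1}(\sigma)$ to have the same parent-child configuration, clashing with the fact that $p_i$ acts nontrivially on $\sigma$. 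Making this contradiction precise --- and thereby extracting the full strength of the canonical condition --- is the delicate step at the heart of the proof.
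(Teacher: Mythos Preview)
The paper does not give its own proof of Theorem~\ref{Thm3}; the result is quoted from Bousquet-M\'elou~\cite{Bousquet} and used as a black box (indeed, the polyurethane toggles did not exist when \cite{Bousquet} was written). So there is nothing in the paper to compare your argument against. Your outline of Part~1 is, in spirit, the inductive argument Bousquet-M\'elou uses: a preimage tree is determined by the cut of $\pi_1\cdots\pi_{n-1}$ into $(\alpha,\beta)$, and the canonical inequality at the root singles out one cut; recursing gives uniqueness and existence.

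Your Part~2 plan, however, has two real gaps. First, the parenthetical claim that one can choose a toggle chain from $\sigma$ to $\sigma'$ that ``stays inside the canonical locus'' is unsupported. Canonicality is \emph{not} a skeletal property: it compares the label of a left child with the first in-order label of a right subtree, and a toggle $p_i$ can destroy that inequality. Part~1 gives you exactly one canonical element per sorted image, not a path of canonical elements through the orbit, so the reduction to a single toggle with both endpoints canonical is not justified.

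Second, the step you call ``delicate'' is the entire content of Part~2, and you have not proved it. You need that whenever $\sigma$ is canonical, $p_i(\sigma)\neq\sigma$, and $p_i$ fixes $\pi=s(\sigma)$, then for \emph{every} $\tau\in s^{-1}(\pi)$ the labels $i$ and $i+1$ are incomparable in $\mathcal I^{-1}(\tau)$. Your proposed contradiction (``the canonical-split recursion would force $\mathcal I^{-1}(\sigma)$ to have the same parent--child configuration'') is not an argument: different $\tau$'s arise from different cuts of $\pi$, and there is no mechanism offered by which a non-canonical cut placing $i$ below $i+1$ would propagate to the canonical cut. Without this claim, $\tau\mapsto\swap_i(\tau)$ need not even land in $s^{-1}(\swap_i(\pi))$, and the bijection collapses. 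Bousquet-M\'elou's original proof of Part~2 avoids this entirely by analyzing directly which cuts $(\alpha,\beta)$ are valid at each node and showing that this data is encoded by the skeleton of the canonical tree; that structural route is what you would need to recover.
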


Given a sorted permutation $\pi\in S_n$, we call the unique canonical permutation in $s^{-1}(\pi)$ the \emph{canonical preimage of }$\pi$. Theorem \ref{Thm3} led Bousquet-M\'elou to ask for a general method for computing the fertility of a permutation from the skeleton of its canonical preimage. This was accomplished in \cite{DefantPostorder,DefantPreimages,DefantClass} using different language. Invoking Theorem \ref{Thm2}, we obtain the following strengthening of Bousquet-M\'elou's theorem. 

\begin{corollary}\label{Cor2}
If $\mathfrak s$ is a higher-order twisted stack-sorting operator and $\pi,\pi'\in S_n$ are sorted permutations whose canonical preimages have the same skeleton, then all of the skeletal statistics are jointly equidistributed on $\mathfrak s^{-1}(\pi)$ and $\mathfrak s^{-1}(\pi')$.
\end{corollary}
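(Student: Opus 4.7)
The plan is to bootstrap from Theorem~\ref{Thm3} (Bousquet-M\'elou) and Theorem~\ref{Thm2}. First I would use Theorem~\ref{Thm3} to produce a canonical skeleton-preserving bijection $\Omega\colon s^{-1}(\pi)\to s^{-1}(\pi')$. The hypothesis guarantees, via Theorem~\ref{Thm3}, that $s^{-1}(\pi)$ and $s^{-1}(\pi')$ realize the same set of binary plane trees as skeletons. Moreover, inside any $s^{-1}(\rho)$ each skeleton occurs at most once, because the labels of a decreasing binary plane tree with a prescribed skeleton are forced recursively by its postorder reading: the root is labeled by the last entry of $\rho$, and the left and right subtrees are then labeled by the corresponding consecutive subwords of $\rho$. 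Thus $\Omega$ is well-defined by matching elements of $s^{-1}(\pi)$ and $s^{-1}(\pi')$ according to their skeletons.

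Next, I would analyze $\mathfrak{s}=\nu_m\circ\cdots\circ\nu_1$ (with $\nu_1=s$) by letting $j\in[m]$ be the largest index with $\nu_j=s$ and writing $\mathfrak{s}=\rev^{m-j}\circ\mathfrak{s}_j$, where $\mathfrak{s}_j=\nu_j\circ\cdots\circ\nu_1$ is itself a higher-order twisted stack-sorting operator whose outermost operator is $s$. If $m-j$ is odd, then $\mathfrak{s}^{-1}(\pi)=\mathfrak{s}_j^{-1}(\rev(\pi))$; since the image of $\mathfrak{s}_j$ is contained in the image of $s$ (permutations of $[n]$ ending with $n$), and since $\rev(\pi)$ does not end with $n$ for a sorted $\pi$ when $n\geq 2$, we conclude that $\mathfrak{s}^{-1}(\pi)=\emptyset$, with the same holding for $\pi'$. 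The corollary is then vacuous, and the $n=1$ case is trivial.

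In the main case where $m-j$ is even, $\mathfrak{s}^{-1}(\pi)=\mathfrak{s}_j^{-1}(\pi)$. Writing $\mathfrak{s}_j=s\circ\tilde{\mathfrak{s}}$ with $\tilde{\mathfrak{s}}=\nu_{j-1}\circ\cdots\circ\nu_1$ (interpreted as the identity if $j=1$), we obtain $\mathfrak{s}^{-1}(\pi)=\bigsqcup_{\lambda\in s^{-1}(\pi)}\tilde{\mathfrak{s}}^{-1}(\lambda)$, and analogously for $\pi'$. For each $\lambda\in s^{-1}(\pi)$, the pair $(\lambda,\Omega(\lambda))$ consists of two permutations with the same skeleton; Theorem~\ref{Thm2} applied to $\tilde{\mathfrak{s}}$ (or the trivial bijection when $j=1$) yields a bijection $\tilde{\mathfrak{s}}^{-1}(\lambda)\to\tilde{\mathfrak{s}}^{-1}(\Omega(\lambda))$ preserving all skeletal statistics. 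Taking the disjoint union over $\lambda$ gives the desired bijection $\mathfrak{s}^{-1}(\pi)\to\mathfrak{s}^{-1}(\pi')$. The main obstacle I expect is the subcase $m-j$ odd, which is dispatched by the image argument above; the remainder is clean bookkeeping combining Theorems~\ref{Thm3} and~\ref{Thm2}.
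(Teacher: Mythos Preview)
Your argument is correct and is essentially the approach the paper intends: combine Bousquet-M\'elou's Theorem~\ref{Thm3} (to obtain a skeleton-preserving bijection $s^{-1}(\pi)\to s^{-1}(\pi')$, using that a skeleton together with a postorder reading determines the labeling) with Theorem~\ref{Thm2} applied to the inner operator, after peeling off the outermost $s$ and any trailing $\rev$'s. The paper states Corollary~\ref{Cor2} as an immediate consequence and only spells out the decomposition $\mathfrak s=\widehat{\mathfrak s}\circ\widetilde{\mathfrak s}$ in the subsequent remark about $\zeil$; your write-up is simply more explicit about the bookkeeping and the vacuous odd-$\rev$ edge case.
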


One might ask if Theorem \ref{Thm3} actually tells us anything new. In other words, Corollary \ref{Cor2} would follow immediately from Theorem \ref{Thm2} (without the help of Theorem \ref{Thm3}) if we could show that the skeleton of the canonical preimage of a sorted permutation $\pi$ determines the skeleton of $\pi$. This turns out to be false. The sorted permutations $42135$ and $32145$ have different skeletons, but their canonical preimages $45231$ and $35241$ have the same skeleton. Thus, Corollary \ref{Cor2} applies when $\pi=42135$ and $\pi'=32145$ even though Theorem \ref{Thm2} does not apply in this case. This also explains why we did not include the statistic $\zeil$ in the collection of jointly equidistributed statistics in Corollary \ref{Cor2}. We have $s^{-1}(42135)=\{45231\}$ and $s^{-1}(32145)=\{35241\}$, and $\zeil$ is not equidistributed on these two sets because $\zeil(45231)=1\neq 2=\zeil(35241)$. Let us remark, however, that we \emph{can} add $\zeil$ to the collection of jointly equidistributed statistics in Corollary~\ref{Cor2} \emph{if} $\mathfrak s\not\in\{s,\rev\circ s\}$. Indeed, in this case, we can write $\mathfrak s=\widehat{\mathfrak s}\circ\widetilde{\mathfrak s}$ for some higher-order twisted stack-sorting operators $\widehat{\mathfrak s}$ and $\widetilde{\mathfrak s}$. Theorem~\ref{Thm3} tells us that there is a skeleton-preserving bijection from $\widehat{\mathfrak s}^{-1}(\pi)$ to $\widehat{\mathfrak s}^{-1}(\pi')$, so it follows from Theorem~\ref{Thm2} that there is a skeleton-preserving bijection from $\mathfrak s^{-1}(\pi)=\widetilde{\mathfrak s}^{-1}(\widehat{\mathfrak s}^{-1}(\pi))$ to $\mathfrak s^{-1}(\pi')=\widetilde{\mathfrak s}^{-1}(\widehat{\mathfrak s}^{-1}(\pi'))$ that also preserves the $\zeil$ statistic. 

We have just seen that the skeleton of the canonical preimage of a sorted permutation $\pi$ does not determine the skeleton of $\pi$. The reverse dependency, however, does hold. 

\begin{theorem}\label{Thm5}
If $\pi,\pi'\in S_n$ are sorted permutations that have the same skeleton, then the canonical preimages of $\pi$ and $\pi'$ have the same skeleton. 
\end{theorem}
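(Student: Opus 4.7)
The plan is to reduce to the case of a single polyurethane toggle and then directly verify that toggling preserves canonicality of the canonical preimage. By Theorem~\ref{Thm1}, $\pi$ and $\pi'$ lie in a common $\mathscr{P}_n$-orbit, so an induction on the length of a chain of toggles connecting them reduces the problem to the following claim: if $\pi$ is sorted with canonical preimage $\sigma$, and if $\pi' = p_i(\pi) = \swap_i(\pi)$ is not equal to $\pi$ for some $i \in [n-1]$, then $\pi'$ is also sorted and its canonical preimage is $\swap_i(\sigma)$. Because swapping two labels preserves the underlying tree structure, this will immediately yield the desired equality of skeletons of the canonical preimages.

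Let $T = \mathcal{I}^{-1}(\sigma)$. Lemma~\ref{Lem1} gives $s(\swap_i(\sigma)) = \swap_i(s(\sigma)) = \pi'$, so $\swap_i(\sigma) \in s^{-1}(\pi')$; in particular, $\pi'$ is sorted. By the uniqueness clause of Theorem~\ref{Thm3}, it now suffices to show that $T' := \swap_i(T) = \mathcal{I}^{-1}(\swap_i(\sigma))$ is canonical. Since $T$ and $T'$ share the same skeleton, every vertex of $T'$ with a left child automatically has a nonempty right subtree, inherited from the canonicality of $T$. What remains is the label inequality from the definition of canonicality: at each vertex $v$ of $T'$ with left child $\ell$, the first entry of $\mathcal{I}(T_v^R)$ must be smaller than the label of $\ell$.

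Because $T$ is canonical and $T'$ differs from $T$ only in that the labels $i$ and $i+1$ have been swapped, a direct case check shows the inequality can fail in $T'$ only if, in $T$, there is a vertex $v$ whose left child $\ell$ is labeled $i+1$ and whose right subtree has leftmost in-order vertex $r$ labeled $i$. The crux of the proof is to rule out this configuration using the assumption $p_i(\pi) = \swap_i(\pi) \neq \pi$, which demands that some entry exceeding $i+1$ lie strictly between the positions of $i$ and $i+1$ in $\pi = \mathcal{P}(T)$. In the suspected configuration, $r$ has no left child in $T$ (being the leftmost in-order vertex of $T_v^R$), so unfolding the postorder traversal of $T_v^R$ along the leftmost path from its root down to $r$ shows that $\mathcal{P}(T_v^R)$ begins with the postorder of $r$'s right subtree, followed immediately by $r = i$. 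Meanwhile, the postorder of the left subtree of $v$ ends with $\ell = i+1$. Consequently, every entry lying strictly between the positions of $i+1$ and $i$ in $\pi$ is a descendant of $r$ and so has label less than $i$, contradicting the requirement that some entry exceeding $i+1$ lie between them.

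The main obstacle is the postorder unfolding in the preceding paragraph; once this is in hand, everything else is bookkeeping, namely the reduction via Theorem~\ref{Thm1}, the application of Lemma~\ref{Lem1}, and the appeal to uniqueness in Theorem~\ref{Thm3}.
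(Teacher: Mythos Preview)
Your proof is correct and follows the same strategy as the paper's: reduce via Theorem~\ref{Thm1} to a single toggle $\pi' = p_i(\pi) = \swap_i(\pi)$, use Lemma~\ref{Lem1} to get $s(\swap_i(\sigma)) = \pi'$, and then argue by contradiction that the swapped tree remains canonical by isolating the unique problematic configuration (left child labeled $i+1$, leftmost in-order vertex of the right subtree labeled $i$) and showing that in that configuration all entries between $i+1$ and $i$ in $\pi$ are descendants of $i$. Your version is somewhat more explicit than the paper's in justifying the case analysis and the postorder unfolding, but the argument is otherwise identical.
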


\begin{proof}
As in the proofs of Theorems \ref{Thm2} and \ref{Thm4}, it suffices to consider the case in which $\pi'=p_i(\pi)=\swap_i(\pi)$ for some $i\in[n-1]$. Let $\sigma$ and $\sigma'$ be the canonical preimages of $\pi$ and $\pi'$, respectively. Lemma \ref{Lem1} tells us that $s(p_i(\sigma))=p_i(s(\sigma))=p_i(\pi)=\pi'$. We claim that $p_i(\sigma)$ is canonical. If we can prove this, then we will know that $p_i(\sigma)=\sigma'$ because the canonical preimage of $\pi'$ is unique. This will prove that $\sigma$ and $\sigma'$ have the same skeleton, as desired. 

To prove the claim, we assume by way of contradiction that $p_i(\sigma)$ is not canonical. From this assumption, one can verify that $i+1$ is the left child of a vertex $v$ in $\mathcal I^{-1}(\sigma)$ and that $v$ has a nonempty right subtree $T_v^R$ such that the first entry of $\mathcal I(T_v^R)$ is $i$. One can now check that every entry of $\mathcal P(\mathcal I^{-1}(\sigma))$ appearing between $i$ and $i+1$ is a descendant of $i$ in $\mathcal I^{-1}(\sigma)$. Every such entry is necessarily smaller than $i$. Since $\mathcal P(\mathcal I^{-1}(\sigma))=s(\sigma)=\pi$ by \eqref{Eq6}, we find that there are no entries between $i$ and $i+1$ in $\pi$ that are greater than $i+1$. However, this means that $p_i(\pi)=\pi\neq\swap_i(\pi)$, which is a contradiction.     
\end{proof}

We end this section with a discussion of a somewhat unexpected connection among skeletons of permutations, the stack-sorting map, and the group structure of $S_n$. One can naturally identify $S_n$ with the symmetric group $\mathfrak S_{[n]}$ by associating $\pi=\pi_1\cdots\pi_n\in S_n$ with the bijection from $[n]$ to $[n]$ that sends $i$ to $\pi_i$ for all $i\in[n]$. This defines a group operation $\boldsymbol{\cdot}$ on $S_n$. More precisely, if $\pi=\pi_1\cdots\pi_n$ and $\sigma=\sigma_1\cdots\sigma_n$, then $\pi\boldsymbol{\cdot}\sigma=\pi_{\sigma_1}\cdots\pi_{\sigma_n}$. Let $\pi^{-1}$ denote the inverse of $\pi$ in the group $S_n$. Note that $\pi\in\Av(231)$ if and only if $\pi^{-1}\in\Av(312)$. 

One can show (see Exercise 19 in Chapter 8 of \cite{Bona}) that 
\begin{equation}\label{Eq3}
\pi,\sigma\in S_n\text{ have the same skeleton if and only if }\pi^{-1}\boldsymbol{\cdot} s(\pi)=\sigma^{-1}\boldsymbol{\cdot} s(\sigma).
\end{equation} Exercise 21 in Chapter 8 of \cite{Bona} asks the reader to compute the size of the set $\{\pi^{-1}\boldsymbol{\cdot} s(\pi):\pi\in S_n\}$. The answer is the $n^\text{th}$ Catalan number $C_n$. In fact, we can use Theorem \ref{Thm1} to obtain the following proposition. 

\begin{proposition}\label{Prop1}
We have \[\{\pi^{-1}\boldsymbol{\cdot} s(\pi):\pi\in S_n\}=\Av_n(312)\quad\text{and}\quad\{s(\pi)^{-1}\boldsymbol{\cdot}\pi:\pi\in S_n\}=\Av_n(231).\]
\end{proposition}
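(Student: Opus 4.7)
My plan is to reduce both equalities to Theorem \ref{Thm1} together with the identity \eqref{Eq3} from the cited exercise in \cite{Bona}, after first computing both quantities on the $231$-avoiding representative of each $\mathscr{P}_n$-orbit. The key preliminary observation is that \eqref{Eq1} gives $s(\rho)=12\cdots n$ for every $\rho\in\Av_n(231)$, so for such $\rho$ we have the trivializations $\rho^{-1}\boldsymbol{\cdot} s(\rho)=\rho^{-1}$ and $s(\rho)^{-1}\boldsymbol{\cdot}\rho=\rho$.

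Next, I would use \eqref{Eq3} to observe that the map $\pi\mapsto\pi^{-1}\boldsymbol{\cdot} s(\pi)$ is constant on each skeleton class, and by taking inverses of both sides of \eqref{Eq3} (using $(a^{-1}\boldsymbol{\cdot} b)^{-1}=b^{-1}\boldsymbol{\cdot} a$), the same is true of $\pi\mapsto s(\pi)^{-1}\boldsymbol{\cdot}\pi$. By Theorem \ref{Thm1}, skeleton classes are exactly the $\mathscr{P}_n$-orbits, and every orbit contains a unique $231$-avoiding permutation. Therefore, as $\pi$ ranges over $S_n$, the value $\pi^{-1}\boldsymbol{\cdot} s(\pi)$ (respectively $s(\pi)^{-1}\boldsymbol{\cdot}\pi$) depends only on the orbit of $\pi$, and I may compute the image of each map by letting $\rho$ range over $\Av_n(231)$.

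Combining the two steps, I would conclude
\[\{\pi^{-1}\boldsymbol{\cdot} s(\pi):\pi\in S_n\}=\{\rho^{-1}:\rho\in\Av_n(231)\}\qquad\text{and}\qquad\{s(\pi)^{-1}\boldsymbol{\cdot}\pi:\pi\in S_n\}=\Av_n(231).\]
The second equation is exactly the desired one. For the first, I would invoke the fact noted just before Proposition \ref{Prop1} that $\rho\in\Av_n(231)$ if and only if $\rho^{-1}\in\Av_n(312)$, which gives $\{\rho^{-1}:\rho\in\Av_n(231)\}=\Av_n(312)$.

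There is no real obstacle here; the entire argument is a bookkeeping exercise assembling the three ingredients \eqref{Eq1}, \eqref{Eq3}, and Theorem \ref{Thm1}. The only point requiring a moment of care is making sure that inversion is applied correctly when transferring \eqref{Eq3} from the form $\pi^{-1}\boldsymbol{\cdot} s(\pi)=\sigma^{-1}\boldsymbol{\cdot} s(\sigma)$ to the form $s(\pi)^{-1}\boldsymbol{\cdot}\pi=s(\sigma)^{-1}\boldsymbol{\cdot}\sigma$, which is immediate since inversion is a bijection on $S_n$.
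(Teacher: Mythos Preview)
Your proposal is correct and follows essentially the same approach as the paper's proof: both use \eqref{Eq3} to reduce the computation to the unique $231$-avoiding representative in each orbit (via Theorem~\ref{Thm1}), invoke \eqref{Eq1} to trivialize $s(\rho)$ there, and then pass between the two equalities using the bijection $\rho\mapsto\rho^{-1}$ between $\Av_n(231)$ and $\Av_n(312)$. The only cosmetic difference is that the paper derives the second identity by inverting the first, whereas you compute both directly on $\Av_n(231)$.
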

\begin{proof}
We know by Theorem \ref{Thm1} that every $\mathscr P_n$-orbit of $S_n$ contains a unique $231$-avoiding permutation. Since $s^{-1}(123\cdots n)=\Av_n(231)$ by \eqref{Eq1}, it follows from the above discussion that \[\{\pi^{-1}\boldsymbol{\cdot} s(\pi):\pi\in S_n\}=\{\pi^{-1}\boldsymbol{\cdot} s(\pi):\pi\in \Av_n(231)\}=\{\pi^{-1}:\pi\in \Av_n(231)\}=\Av_n(312).\] Therefore, \[\{s(\pi)^{-1}\boldsymbol{\cdot}\pi:\pi\in S_n\}=\{(\pi^{-1}\boldsymbol{\cdot} s(\pi))^{-1}:\pi\in S_n\}=\{\sigma^{-1}:\sigma\in\Av_n(312)\}=\Av_n(231). \qedhere\] 
\end{proof}

In Definition \ref{Def2}, we defined a sliding operator $\swd$. There are two alternative definitions of this operator (which better explain the name ``sliding operator" and the symbol ``$\swd$") appearing in \cite{DefantCatalan} and \cite{DefantFertilityWilf}. As a consequence of Proposition \ref{Prop1}, we obtain a fourth description of this operator that bears very little resemblance to the other three. 

\begin{corollary}\label{Cor3}
For every $\pi\in S_n$, we have $\swd(\pi)=s(\pi)^{-1}\boldsymbol{\cdot}\pi$. 
\end{corollary}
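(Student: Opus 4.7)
The plan is to identify $s(\pi)^{-1}\boldsymbol{\cdot}\pi$ as a permutation that is both $231$-avoiding and shares a skeleton with $\pi$; once that is done, the uniqueness clause of Theorem~\ref{Thm1} (equivalently, Definition~\ref{Def2}) forces this permutation to be $\swd(\pi)$. So I would set $\sigma = s(\pi)^{-1}\boldsymbol{\cdot}\pi$ and prove the two assertions (a) $\sigma \in \Av_n(231)$ and (b) $\sigma$ has the same skeleton as $\pi$.

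Part (a) requires no new work: Proposition~\ref{Prop1} states precisely that $\{s(\pi)^{-1}\boldsymbol{\cdot}\pi:\pi\in S_n\}=\Av_n(231)$, so $\sigma\in\Av_n(231)$.

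Part (b) is where the little computation happens, and it uses the characterization of same-skeleton permutations recorded in \eqref{Eq3}: $\pi$ and $\sigma$ have the same skeleton if and only if $\pi^{-1}\boldsymbol{\cdot} s(\pi)=\sigma^{-1}\boldsymbol{\cdot} s(\sigma)$. From $\sigma = s(\pi)^{-1}\boldsymbol{\cdot}\pi$ I immediately get $\sigma^{-1}=\pi^{-1}\boldsymbol{\cdot} s(\pi)$. The key observation is that because $\sigma$ is $231$-avoiding, \eqref{Eq1} gives $s(\sigma)=12\cdots n$, which is the identity of the group $S_n$. Therefore $\sigma^{-1}\boldsymbol{\cdot} s(\sigma)=\sigma^{-1}=\pi^{-1}\boldsymbol{\cdot} s(\pi)$, and \eqref{Eq3} yields that $\sigma$ and $\pi$ share a skeleton.

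Combining (a) and (b) with the uniqueness of the $231$-avoiding representative of each $\mathscr P_n$-orbit (Theorem~\ref{Thm1}), we conclude $\sigma=\swd(\pi)$, which is the desired identity. There is no real obstacle here; the only ``trick'' is recognizing that the identity $s(\sigma)=12\cdots n$ for $\sigma\in\Av_n(231)$ is exactly what collapses $\sigma^{-1}\boldsymbol{\cdot} s(\sigma)$ to $\sigma^{-1}$ so that \eqref{Eq3} can be applied cleanly.
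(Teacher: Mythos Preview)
Your proof is correct and follows the same route as the paper: use Proposition~\ref{Prop1} to get $\sigma:=s(\pi)^{-1}\boldsymbol{\cdot}\pi\in\Av_n(231)$, invoke \eqref{Eq1} to see $s(\sigma)=12\cdots n$, then apply \eqref{Eq3} to conclude $\sigma$ shares its skeleton with $\pi$, whence $\sigma=\swd(\pi)$ by Definition~\ref{Def2}. The only difference is presentational---you name $\sigma$ and split into parts (a) and (b)---but the logical content is identical.
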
 

\begin{proof}
We know by Proposition \ref{Prop1} that $s(\pi)^{-1}\boldsymbol{\cdot} \pi\in\Av_n(231)$, so it follows from \eqref{Eq1} that \linebreak$s(s(\pi)^{-1}\boldsymbol{\cdot} \pi)=123\cdots n$. This shows that $(s(\pi)^{-1}\boldsymbol{\cdot} \pi)^{-1}\boldsymbol{\cdot}s(s(\pi)^{-1}\boldsymbol{\cdot} \pi)=\pi^{-1}\boldsymbol{\cdot} s(\pi)$, so \eqref{Eq3} tells us that $\pi$ and $s(\pi)^{-1}\boldsymbol{\cdot}\pi$ have the same skeleton. The proof now follows from Definition \ref{Def2}.  
\end{proof}

\section{Open Problems}

It would be interesting to determine the actual isomorphism types (or even just the orders) of the polyurethane groups $\mathscr P_n$. We know that $p_{n-1}=1$ and that $p_i^2=1$ for each $i\in[n-2]$. It is also easy to see that $p_ip_j=p_jp_i$ when $i$ and $j$ are not consecutive integers, and one can verify that $(p_ip_{i+1})^6=1$. This shows that $\mathscr P_n$ is a quotient of the Coxeter group with Coxeter graph $\underbrace{\includegraphics[width=5cm]{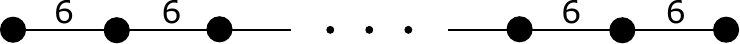}}_{n-2}$. It is easy to see that $\mathscr P_3$ has order $2$. One can also show that $\mathscr P_4$ is the Coxeter group $G_2$, which is isomorphic to $D_{12}$, the dihedral group of order $12$. 

We know by Theorem \ref{Thm2} that the statistic $\fer_{s^2}$ given by $\fer_{s^2}(\pi)=|s^{-2}(\pi)|$ is skeletal. It would be very interesting (and probably very useful) to have a method for determining $\fer_{s^2}(\pi)$ from the skeleton of $\pi$. Alternatively, one could attempt to follow the ideas introduced in \cite{DefantPostorder,DefantPreimages,DefantClass} to produce a method for determining $\fer_{s^2}(\pi)$ from the skeleton of the canonical preimage of $\pi$ when $\pi$ is sorted (those articles are phrased in terms of ``canonical valid hook configurations" instead of canonical preimages).   

\section{Acknowledgments}

The author thanks the anonymous referee for several helpful suggestions. He thanks Caleb Ji for a useful conversation about group theory and thanks Roktim Barkakati for a useful conversation about polyurethane. He also thanks Amanda Burcroff, who referred him to the article \cite{Bjorner}. The author was supported by a Fannie and John Hertz Foundation Fellowship and an NSF Graduate Research Fellowship.

\end{document}